\documentclass[11pt]{article}
\PassOptionsToPackage{obeyspaces}{url}
\usepackage{amsfonts, amsmath, amssymb}

\usepackage[hidelinks]{hyperref}
\usepackage{bookmark}
\bookmarksetup{open,numbered,depth=5} 

\usepackage{amsthm}

\usepackage{breakurl}
\usepackage{tikz}

\usepackage{etoolbox} 
\patchcmd{\thebibliography}{\leftmargin\labelwidth}{\leftmargin\labelwidth\addtolength\itemsep{-0.1\baselineskip}}{}{}

\usepackage{mathtools}

\usepackage{epstopdf}

\usepackage[justification=raggedright]{caption}

\oddsidemargin  0pt
\evensidemargin 0pt
\marginparwidth 40pt
\marginparsep 10pt
\topmargin -20pt
\headsep 10pt
\textheight 8.7in
\textwidth 6.65in

\author{Boris Bukh\thanks{Department of Mathematical Sciences, Carnegie Mellon University, Pittsburgh, PA 15213, USA\@. Supported in part by U.S.\ taxpayers through NSF CAREER grant DMS-1555149.}\and
R. Amzi Jeffs\thanks{Department of Mathematical Sciences, Carnegie Mellon University, Pittsburgh, PA 15213, USA\@. Supported by the National Science Foundation through Award No. 2103206.}}

\title{Enumeration of interval graphs and $d$-representable complexes}
\date{March 2022}

\usepackage[nameinlink]{cleveref}

\newtheorem{theorem}{Theorem}
\newtheorem*{collapsethmrestated}{\Cref{thm:d-collapsible} (restated)}

\newtheorem{corollary}[theorem]{Corollary}
\newtheorem{proposition}[theorem]{Proposition}

\theoremstyle{remark}
\newtheorem{example}[theorem]{Example}

\newcommand*{\eqdef}{\stackrel{\mbox{\normalfont\tiny def}}{=}}  
\newcommand*{\R}{\mathbb{R}}                                     
                                
\DeclareMathOperator{\conv}{conv}                                 
\DeclareMathOperator{\nerve}{nerve}                                 


\begin{document}
\maketitle
\begin{abstract}
For each fixed $d\ge 1$, we obtain asymptotic estimates for the number of $d$-representable simplicial complexes on $n$ vertices as a function of~$n$.
The case $d=1$ corresponds to counting interval graphs, and we obtain new results in this well-studied case as well.
Our results imply that the $d$\nobreakdash-representable complexes comprise a vanishingly small fraction of $d$-collapsible complexes.
\end{abstract}

\section{Introduction}

Given a tuple $\mathcal C =  (C_1, C_2, \dotsc, C_n)$ of convex sets, one may record their intersection pattern using the \emph{nerve complex}
\[
\nerve(\mathcal C) \eqdef \Big\{\sigma \, \Big\vert\, \bigcap_{i\in\sigma} C_i \neq \emptyset \Big\}. 
\]
Observe that $\nerve(\mathcal C)$ is a simplicial complex on the vertex set $[n]\eqdef \{1,2,\dotsc,n\}$.

A simplicial complex is called \emph{$d$-representable} if it is the nerve of a tuple of convex sets in $\R^d$. 
Such a tuple is called a \emph{$d$-representation} of $\Delta$. 
Given a $d$-representation, one may fix a point $p_\sigma$ in each nonempty region $\bigcap_{i\in\sigma} C_i$ and then replace each $C_i$ by the convex hull of the $p_\sigma$ it contains, obtaining a $d$-representation of the same complex in which each set is compact.  
Henceforth we will only consider $d$-representations consisting of compact convex sets.  

The class of $d$-representable complexes enjoys many useful topological and combinatorial properties. 
In particular, Helly's theorem implies that a $d$-representable complex is completely determined by its $d$-skeleton.
Indeed, if $\Delta$ is $d$-representable and $\sigma\subseteq [n]$ is a set of size $d+2$ or larger, then $\sigma$ is a face of $\Delta$ if and only if every subset of $\sigma$ with size $d+1$ is a face of $\Delta$.

Perhaps the closest combinatorial analog of $d$-representability is \emph{$d$-collapsibility}. 
A \emph{free face} in a simplicial complex is a face that is contained in a unique facet.
A \emph{$d$-collapse} is the operation of deleting a free face with dimension $d-1$ or less (and all faces that contain it). 
Finally, a simplicial complex $\Delta$ is called \emph{$d$-collapsible} if there is a sequence of $d$-collapses from $\Delta$ to the empty complex. 

In 1975 Wegner \cite{wegner_collapsing} proved that $d$-representable complexes are $d$-collapsible by ordering the regions corresponding to faces according to a generic linear function (see \cite[Figure 3]{tancer_survey} for a modern sketch of the proof). There are $d$-collapsible complexes that are not $d$-representable, for any $d\ge 1$,
and there has been a variety of work exploring the properties of $d$-representable complexes, $d$-collapsible complexes and the related (even more general) class of $d$-Leray complexes \cite{kalai_intersection_patterns, eckhoff_vectors, matousek_tancer_gap, tancer_projective_planes, tancer_np_complete}. 
Tancer's survey \cite{tancer_survey} provides a good overview of results in this area prior to~2013. 

In this work we study, for fixed $d\ge 1$, the growth of the number of $d$-representable complexes on the vertex set $[n]$ as a function of $n$.
We also consider the number of $d$-collapsible complexes. 
For each $d\ge 1$ we determine the rough rate of growth for these quantities and provide upper and lower bounds on the constants involved.

\paragraph{One-dimensional case (counting interval graphs).}
We first discuss the case $d=1$, which is of particular interest.
Since the $1$\nobreakdash-skeleton of a $1$\nobreakdash-representable complex determines the entire complex, $1$-representable complexes are exactly the clique complexes of the well-studied class of \emph{interval graphs}, which record the nonempty pairwise intersections of $n$ closed intervals\footnote{There seems to be no consensus as to whether the empty set is an interval. However,
the class of interval graphs is unaffected by the choice of a side in this question, for one may always replace each empty set in a $1$-representation by an interval that intersects no other.} on the real line. 
Thus the number of $1$-representable complexes on the vertex set $[n]$ is equal to the number of (labeled) interval graphs on this vertex set. 

There were several works \cite{cohen_komlos_mueller_probability,hanlon,gavoille_paul_labeling,yang_pippenger,acan_chakraborty_jo_satti_succinct} concerned with estimating the number of interval graphs.
The currently sharpest asymptotic result is due to Gavoille and Paul \cite{gavoille_paul_labeling} who proved that the number of interval graphs on $n$ vertices is $\exp\bigl(2n\log n-O(n\log\log n)\bigr)$.
The upper bound is straightforward, as an $n$-vertex interval graph is determined by the relative order of $2n$ interval endpoints in its $1$\nobreakdash-representation.
The nontrivial part of their result is in the proof of the lower bound. Unaware of this result, weaker lower bounds
were subsequently found by Yang and Pippenger \cite{yang_pippenger} and Acan, Chakraborty, Jo and Satti~\cite{acan_chakraborty_jo_satti_succinct}.
A different approach was taken by Hanlon \cite{hanlon} who found a recursive expression for the generating function for the number of \emph{unlabeled}
interval graphs. In addition, there were several works \cite{ysku,mikos} on quick algorithmic enumeration of non-isomorphic interval graphs.

By relating the enumeration of interval graphs to that of \emph{interval orders} and applying results of Brightwell and Keller \cite{brightwell_keller},
we find an improved asymptotic for the number of the interval graphs.
\begin{theorem}\label{thm:interval}
  The number of interval graphs on the vertex set $[n]$ is $e^{2n\log n - (2+\log (\pi^2/6))n +O(\log n)}$.
\end{theorem}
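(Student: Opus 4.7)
The plan is to reduce the enumeration of interval graphs to the enumeration of \emph{interval orders}: partial orders on $[n]$ realizable by closed intervals $I_1,\dots,I_n \subseteq \R$ with $i < j$ iff $I_i$ lies entirely to the left of $I_j$. These are exactly the $(2+2)$-free posets, and their labeled enumeration has been studied by a number of authors. For any interval order $P$ on $[n]$, its incomparability graph $G(P)$ (in which $ij$ is an edge iff $I_i \cap I_j \neq \emptyset$) is an interval graph, and every labeled interval graph on $[n]$ arises as $G(P)$ for some $P$. Writing $I_n$ for the number of labeled interval orders and $\mu(G) = \bigl|\{P : G(P)=G\}\bigr|$ for the fiber size, we have
\[
I_n \;=\; \sum_{G} \mu(G),
\]
with the sum over labeled interval graphs $G$ on $[n]$.

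First I would invoke Brightwell and Keller's asymptotic to obtain an expansion $I_n = e^{2n\log n - \alpha n + O(\log n)}$ for an explicit constant $\alpha$, with the $\zeta(2) = \pi^2/6$ factor arising from their generating-function analysis of $(2+2)$-free posets. Next I would analyze $\mu(G)$ structurally: an interval order lying above a fixed interval graph $G$ is determined by a total ordering of the connected components of $G$ together with a left-right orientation on each, so $\mu(G) \le c(G)!\cdot 2^{c(G)}$, where $c(G)$ is the number of components. Since $\log I_n = \log f_1(n) + \log \overline{\mu}$ for $\overline{\mu}$ the typical fiber size, the bound $\overline{\mu} = e^{O(\log n)}$ would immediately yield $\log f_1(n) = 2n\log n - \alpha n + O(\log n)$, and matching the constants forces $\alpha = 2 + \log(\pi^2/6)$.

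The main obstacle will be to control $\mu(G)$ uniformly enough to preserve the $O(\log n)$ error term. This requires showing that for almost every labeled interval graph $G$ the component count $c(G)$ is small --- say $c(G) = O(\log n/\log\log n)$ --- so that $\log \mu(G) = O(\log n)$, and simultaneously ruling out an outsized contribution to $I_n$ from the few highly disconnected graphs. I would expect this tail bound to follow from the same generating-function machinery that underlies Brightwell and Keller's estimate, perhaps via a saddle-point analysis comparing the EGFs of connected and arbitrary $(2+2)$-free posets. The delicate part is that the ``typical'' interval graph under the uniform measure on $f_1(n)$ may differ from the typical one weighted by $\mu(G)$, so one needs enough quantitative grip on the fiber distribution to transfer the Brightwell--Keller asymptotic cleanly to the graph count.
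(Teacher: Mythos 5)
Your overall strategy --- transferring the Brightwell--Keller asymptotic for interval orders to interval graphs by comparing $f_1(n)$ with the number of interval orders --- is the same family of ideas as the paper's, which proves $g(n-1)\le f_1(n)\le g(n)$ and then invokes Brightwell--Keller. But your key structural lemma is false. You claim that an interval order with incomparability graph $G$ is determined by a total ordering of the components of $G$ plus a reflection of each, so that $\mu(G)\le c(G)!\,2^{c(G)}$. The star $K_{1,3}$ (center $c$, leaves $1,2,3$) is already a counterexample: it is connected, yet in any compatible interval order the three leaves are pairwise comparable while $c$ is incomparable to everything, so all $3!=6$ total orders of the leaves yield distinct interval orders over this one graph. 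More generally $K_{1,n-1}$ has $\mu=(n-1)!=e^{n\log n-O(n)}$, the same exponential order as the main term you are trying to isolate. The culprit is exactly Hanlon's ``buried'' intervals --- intervals nested inside others can be permuted freely without changing the graph --- and connectivity gives no control over this. Consequently your fallback of showing $c(G)$ is typically small does not address the real difficulty, and the crude bound $f_1(n)\ge I_n/\max_G\mu(G)$ only yields $f_1(n)\ge e^{n\log n+O(n)}$, losing half the main term. What you would actually need is that the \emph{average} fiber size is $e^{O(\log n)}$, which is essentially equivalent to the theorem itself; it is not clear how to extract this from the generating-function machinery alone.

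The paper sidesteps fiber estimates entirely by constructing an explicit injection from interval orders on $[n-1]$ into interval graphs on $[n]$: fix a canonical ``compressed'' representation of each order with endpoints in $[2n-2]$, stretch every right endpoint by $2$, and adjoin a marker interval $[0,1]$ as the $n$-th vertex. The marker identifies the leftmost group of left endpoints, and a greedy left-to-right decoding --- using the fact that the neighborhoods of the currently active intervals are totally ordered by containment --- recovers the full endpoint structure, hence the interval order, from the graph alone. This yields $g(n-1)\le f_1(n)$ directly, and together with the trivial upper bound and Brightwell--Keller gives the stated asymptotic. Some device of this kind (an injection rather than a fiber count) appears to be unavoidable.
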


\paragraph{Higher-dimensional case.} Our results in dimension $d\geq 2$ are less precise than for~$d=1$.
\begin{theorem}\label{thm:complex}
  The number  $f_d(n)$ of $d$-representable complexes on the vertex set $[n]$ satisfies
  \[
  \tfrac{2}{d^d}n^d\log n - O(n^d) \le \log f_d(n) \le \tfrac{2}{(d-1)!} n^d\log n + O(n^d). 
  \]
  In particular, $f_d(n) = e^{\Theta(n^d\log n)}$. 
\end{theorem}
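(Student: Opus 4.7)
My plan treats the two inequalities separately.

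\emph{Upper bound.} The idea is to encode each $d$-representable complex by a combinatorial object whose count I can control. By Helly's theorem, noted in the introduction, $\Delta$ is determined by its $d$-skeleton. Given a $d$-representation $(C_1,\dotsc,C_n)$, I would choose a generic point $p_\sigma$ in $\bigcap_{i\in\sigma}C_i$ for each face $\sigma$ of size at most $d+1$ and replace each $C_i$ by the convex hull of the points $p_\sigma$ with $i\in\sigma$; the complex is then encoded by the order type of this point set together with its natural labeling. To extract the constant $\tfrac{2}{(d-1)!}$, a cleaner route follows the hyperplane-sweep from Wegner's $d$-collapsibility proof cited in the introduction: sweep a generic hyperplane across $\R^d$, producing $2n$ events (one entry and one exit per $C_i$) and, between consecutive events, a $(d-1)$-representable slice complex on the active vertex set. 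Since consecutive slices differ by a single vertex, this yields an inductive recursion on $d$ with Theorem~\ref{thm:interval} as the base case, and careful bookkeeping of the redundancy between adjacent slices should recover the exact constant.

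\emph{Lower bound.} I would exhibit an explicit family of $e^{\Omega(n^d\log n)}$ distinct $d$-representations. Partition $[n]$ into $d$ groups $A_1,\dotsc,A_d$ of size $m=\lfloor n/d\rfloor$, and arrange the sets of $A_j$ as $m$ thin slabs perpendicular to the $j$-th coordinate axis of $\R^d$. The $m^d$ transversals $T=(t_1,\dotsc,t_d)\in A_1\times\dotsb\times A_d$ correspond to a grid of small boxes $B_T=\bigcap_j C_{t_j}$. For each $T$, independently choose a pair of marker indices $(s_1(T),s_2(T))\in[n]^2$ and deform $C_{s_1(T)}$ and $C_{s_2(T)}$ to pass through $B_T$ while preserving convexity. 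Distinct marker choices produce distinct nerves, since the pair $(s_1(T),s_2(T))$ can be read off from the face $\{t_1,\dotsc,t_d,s_1(T),s_2(T)\}$ of the nerve at each $T$. This yields $n^{2m^d}=e^{2n^d(\log n)/d^d}$ distinct complexes, matching the claimed bound.

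\emph{Main obstacle.} The principal difficulty lies in the lower bound construction: since each $C_s$ serves as a marker at many transversals, its convex deformation must simultaneously pass through many prescribed boxes without creating spurious faces or damaging the combinatorics of the underlying slabs. A natural way to manage this is to restrict the marker data so that the boxes visited by each fixed $C_s$ lie along a low-complexity curve (such as a line), but this imposes global constraints that reduce the count below the naive $(n^2)^{m^d}$ and require a more delicate accounting. The upper bound, by contrast, is largely a bookkeeping exercise once the correct inductive sweep framework is in place.
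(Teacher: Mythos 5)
Both halves of your proposal have genuine gaps, and in each case the missing ingredient is the main idea of the paper's argument. For the upper bound, your first suggestion (encoding by the order type of the points $p_\sigma$) overcounts badly: you have $\Theta(n^{d+1})$ points $p_\sigma$, and the number of order types of $m$ points in $\R^d$ is $e^{\Theta(m\log m)}$, giving $e^{O(n^{d+1}\log n)}$ rather than $e^{O(n^d\log n)}$. Your sweep recursion is sound as far as it goes, but it gives $\log f_d(n)\le 2n\log f_{d-1}(n)+O(n\log n)$, which unrolls to $2^d n^d\log n$, not $\tfrac{2}{(d-1)!}n^d\log n$; for $d\ge 3$ this is strictly weaker than the stated bound, and the ``careful bookkeeping of the redundancy between adjacent slices'' that you defer is precisely the hard step. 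The paper's resolution is to observe that you do not need the full $(d-1)$-representable slice at each sweep time: projecting to a line and recording, for each $d$-subset $\sigma$, the interval $I_\sigma=\pi\bigl(\bigcap_{i\in\sigma}C_i\bigr)$ produces a single interval graph on $\binom{n}{d}$ vertices, and Helly's theorem applied inside each fiber shows this interval graph (plus the free choice of faces of dimension $<d-1$) determines the whole $d$-skeleton. That yields $f_d(n)\le f_1\bigl(\binom{n}{d}\bigr)\cdot 2^{\binom{n}{<d}}$ and hence the constant $\tfrac{2}{(d-1)!}$ directly from \Cref{thm:interval}.

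For the lower bound, the obstacle you name in your final paragraph is fatal to the construction as described, and you do not overcome it. Each marker set $C_s$ is asked to pass through roughly $2m^d/n$ grid boxes; since $C_s$ must remain convex it contains the convex hull of those boxes, which in general sweeps through many unintended boxes and creates spurious faces, and your proposed fix (confining each $C_s$'s boxes to a line) collapses the number of admissible marker patterns to polynomially many per set, destroying the count. So the claimed $n^{2m^d}$ complexes are not actually produced. The paper avoids this entirely with a different construction (\Cref{prop:split-construction}): place $|W|$ disjoint $(d-1)$-dimensional cells on distinct facets of a polytope, put an arbitrary $(d-1)$-representation on each cell, and let each $C_v$ be the convex hull of an interior point together with its pieces on the facets. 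Convexity causes no spurious intersections because the facets of a polytope are already in convex position, and each $\Delta_w$ is recovered as the link of $w$. This gives $f_d(n)\ge f_{d-1}(m)^{n-m}$ and, by induction from the $d=1$ case, the constant $\tfrac{2}{d^d}$ — the same constant your grid was aiming for, but via a recursion on $d$ rather than a single-shot transversal design. If you want to pursue a transversal-style construction, the paper's \Cref{prop:beta2} shows what it takes even for $d=2$: the ``markers'' must live on arcs between adjacent convex regions, chosen in convex position, which is exactly the global constraint you identified but did not resolve.
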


It turns out that it is not difficult to estimate the number of $d$-collapsible complexes on the vertex set~$[n]$. We obtain the following result.
\begin{theorem}\label{thm:d-collapsible}
The number $g_d(n)$ of $d$-collapsible complexes on the vertex set $[n]$ satisfies
  \[
  \tfrac{1}{(d+1)^{d+1}}n^{d+1} + O(n^d) \le \log_2 g_d(n) \le \tfrac{1}{(d+1)!} n^{d+1} + O(n^d). 
  \]
In particular, $g_d(n) = e^{\Theta(n^{d+1})}$. 
\end{theorem}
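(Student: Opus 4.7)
The plan is to combine a structural upper bound with an explicit lower-bound construction.

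For the upper bound, I would use the standard fact that every $d$-collapsible complex is $d$-Leray, meaning every induced subcomplex has vanishing reduced homology in all degrees $\geq d$. I would then observe that a $d$-Leray complex is determined by its $d$-skeleton: if $\sigma \subseteq [n]$ has $|\sigma| \geq d+2$ and every proper subset of $\sigma$ lies in $\Delta$, then $\sigma \in \Delta$ as well, since otherwise the induced subcomplex $\Delta[\sigma]$ would equal $\partial \sigma \simeq S^{|\sigma|-2}$ and have nonzero reduced homology in degree $|\sigma|-2 \geq d$; induction on $|\sigma|$ extends this to all larger faces. Hence $g_d(n)$ is at most the number of simplicial complexes on $[n]$ of dimension $\leq d$, which in turn is at most $2^{\sum_{k=0}^{d+1}\binom{n}{k}} = 2^{n^{d+1}/(d+1)! + O(n^d)}$.

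For the lower bound, I would partition $[n]$ into parts $V_0, V_1, \ldots, V_d$ of size $\lfloor n/(d+1)\rfloor$, and for each function $N \colon V_1 \times \cdots \times V_d \to 2^{V_0}$, define $\Delta_N$ to be the complex on $[n]$ with facets of two types: \emph{base facets} $\{v_i : i \in \sigma\} \cup V_0$ for every $\sigma \subseteq \{1, \ldots, d\}$ with $|\sigma| = d-1$ and every choice of representatives $v_i \in V_i$ for $i \in \sigma$; and \emph{parameterized facets} $F(v_1, \ldots, v_d) := \{v_1, \ldots, v_d\} \cup N(v_1, \ldots, v_d)$ for every $(v_1, \ldots, v_d) \in V_1 \times \cdots \times V_d$. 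Distinct $N$'s yield different parameterized facets, giving $2^{|V_0| \prod_i |V_i|} = 2^{n^{d+1}/(d+1)^{d+1} + O(n^d)}$ distinct complexes $\Delta_N$.

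To verify each $\Delta_N$ is $d$-collapsible, I would exhibit a collapse sequence in $d+1$ phases. In Phase~$1$, for each tuple $(v_1, \ldots, v_d)$, the multipartite structure forces $\{v_1, \ldots, v_d\}$ to lie in only the facet $F(v_1, \ldots, v_d)$ (no base facet contains all $d$ representatives, since base index sets have size $d-1$), so this $(d-1)$-face is free and we collapse it with $F(v_1, \ldots, v_d)$. In Phase~$k$ for $2 \leq k \leq d$, we successively collapse the $(d-k)$-dimensional group faces $\{v_i : i \in \sigma\}$ for $|\sigma|=d-k+1$, each with its unique containing facet $\{v_i : i \in \sigma\} \cup V_0$ (a base facet for $k=2$, a ``broken'' facet produced during Phase~$k-1$ for $k \geq 3$). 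Phase~$d+1$ finishes by iteratively collapsing the remaining simplex on $V_0$.

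The main obstacle is careful bookkeeping during the collapse: each step removes one facet but may create new ``broken'' facets that could in principle subsume the next intended free face. What rescues the argument is the partition structure, which limits any group face $\{v_i : i \in \sigma\}$ to sit only inside facets whose $V_i$-components for $i \in \sigma$ equal these specific representatives; this keeps containment relations controlled throughout all phases.
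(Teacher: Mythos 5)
Your proposal is correct and achieves the same constants as the paper, but the lower bound is organized genuinely differently. On the upper bound you and the paper do essentially the same thing: both reduce to the fact that a $d$-collapsible complex is determined by its $d$-skeleton (the paper cites the Helly property of $d$-collapsible complexes, you route through $d$-Lerayness and the boundary-of-a-simplex homology argument; either citation is standard) and then bound the number of complexes of dimension at most $d$ by $2^{\sum_{k\le d+1}\binom{n}{k}}$. For the lower bound the paper argues by induction on $d$: it proves a cone lemma (\Cref{prop:collapsible-split-construction}) showing that $2^V\cup\bigcup_{w\in W}(\Delta_w\ast w)$ is $d$-collapsible whenever each $\Delta_w$ is $(d-1)$-collapsible, takes split graphs as the base case, and derives $g_d(n)\ge g_{d-1}(\tfrac{d}{d+1}n)^{n/(d+1)}$. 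Your construction is in effect the fully unrolled, flattened form of that induction: you encode a single function $N\colon V_1\times\cdots\times V_d\to 2^{V_0}$ and certify $d$-collapsibility by an explicit phased collapse rather than by repeated appeal to a cone lemma. The verification goes through: no base facet contains a full transversal, so $\{v_1,\dots,v_d,u\}\in\Delta_N$ iff $u\in N(v_1,\dots,v_d)$, giving injectivity; after Phase~1 the surviving faces are exactly the sets $T\cup S$ with $T$ a partial transversal of size at most $d-1$ and $S\subseteq V_0$, so in each later phase a group face $T$ of size $d-k+1$ lies in the unique facet $T\cup V_0$, has dimension $d-k\le d-1$, and collapsing one such $T$ removes only faces containing $T$ and hence cannot destroy the freeness of another group face of the same size. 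The paper's inductive packaging is shorter and reusable (the same cone construction drives the $d$-representable lower bound in \Cref{prop:split-construction}), while your direct construction is more self-contained and makes the collapsing sequence completely explicit; neither yields a better constant than $\tfrac{1}{(d+1)^{d+1}}$.
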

This implies that there are far more $d$-collapsible complexes than $d$-representable complexes. That is surprising
because nearly all published results for $d$-representable complexes in fact apply to $d$-collapsible complexes. 

Another curious consequence of these estimates is that there are more $d$-representable complexes than $(d-1)$-collapsible complexes.
That this is not obvious can be seen from the work of Tancer \cite{tancer_projective_planes}
who, for any $d$, described complexes that are $2$-collapsible but not~$d$-representable.

\paragraph{Thanks.} We are grateful to the referee who made a number of useful suggestions. We are especially grateful to them
for catching a subtle off-by-one mistake in the proof of \Cref{thm:order-sandwich}. We also thank Jacob Fox and J\'anos Pach
for useful comments.

\section{Representable complexes in general: proof of \texorpdfstring{\Cref{thm:complex}}{Theorem 2}}\label{sec:complex}
We prove \Cref{thm:complex} modulo the case $d=1$, which is established in \Cref{sec:interval} by proving \Cref{thm:interval}.
\paragraph{Upper bound.} 
Let $(C_1, C_2, \ldots, C_n)$ be a $d$-representation of a complex $\Delta$ on the vertex set $[n]$. 
Define $N \eqdef \binom{n}{d}$.
Let $\pi$ be a linear projection to the span of $e_1$ in $\mathbb R^d$, and for $\sigma\in \binom{[n]}{d}$ define $I_\sigma \eqdef\nobreak \pi(\bigcap_{i\in \sigma} C_i)$. 
Observe that each $I_\sigma$ is a (possibly empty) closed interval.
We may regard the collection of intervals $\{I_\sigma \mid \sigma\in \binom{[n]}{d}\}$ as a $1$-representation of a simplicial complex $\Gamma$ on the vertex set~$[N]$.

We claim that $\Gamma$ completely determines the $d$-dimensional faces of $\Delta$.
Specifically, we claim that for any $\tau \in \binom{[n]}{d+1}$ we have $\tau\in \Delta$ if and only if the intervals $\{I_{\tau\setminus \{i\}} \mid i \in \tau\}$ share a common point. 
One direction is clear: if $p\in \bigcap_{i\in \tau} C_i$ then $\pi(p)$ lies in all the appropriate intervals.
For the reverse inclusion, suppose that the intervals share a common point, and consider the fiber of $\pi$ over this point.
This fiber is a $(d-1)$-dimensional affine subspace of $\mathbb R^d$. 
Moreover, all $d$-fold intersections of the collection $\{C_i \mid i \in\tau\}$ contain a point in this fiber.
Applying Helly's theorem inside the fiber (regarded as a copy of $\mathbb R^{d-1}$), we conclude that the fiber contains a point in $\bigcap_{i\in\tau} C_i$.
Thus $\tau \in \Delta$ as desired.

Helly's theorem (now applied in $\mathbb R^d$) implies that the $d$-skeleton of $\Delta$ determines all of $\Delta$.
The faces dimension $d$ and $d-1$ in $\Delta$ are determined by $\Gamma$, and the remainder of the $d$-skeleton is determined by choosing faces of dimension less than $d-1$. 
Thus we have 
\[
f_d(n) \le f_1(N) \cdot 2^{\binom{n}{<d}},
\]
where $\binom{n}{<d}\eqdef\binom{n}{d-1}+\dotsb+\binom{n}{0}$.
Taking logarithms and using \Cref{thm:interval}, we obtain
\[
\log(f_d(n)) \le  2 N \log N + O(N) + O(n^{d-1}) = \frac{2}{(d-1)!} n^d\log n + O(n^d). 
\]

\paragraph{Lower bound.} 
Below, we shall use the notation $\Delta \ast w$ to denote the \emph{cone over $\Delta$ with apex $w$}, which is the simplicial complex whose facets are obtained by adding a new vertex $w$ to every facet of~$\Delta$.

\begin{proposition}\label{prop:split-construction}
Let $V$ and $W$ be disjoint sets. 
For each $w\in W$, let $\Delta_w$ be a $(d-1)$\nobreakdash-representable complex on vertex set~$V$. 
Then the simplicial complex 
\[
\Delta \eqdef 2^V \cup \bigcup_{w\in W} (\Delta_w\ast w)
\]
on vertex set $V\sqcup W$ is $d$-representable.
\end{proposition}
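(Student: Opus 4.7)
The plan is to place each $(d-1)$-representation of $\Delta_w$ inside its own $(d-1)$-dimensional affine subspace of $\R^d$, chosen so that these subspaces arise as strict supporting hyperplanes of a single convex polytope whose vertices correspond to the $w\in W$, with one additional vertex $p$ serving as the common meeting point of all the $C_v$.

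To set things up, fix $m+1$ points $p,q_1,\dotsc,q_m\in\R^d$ (where $m=\abs{W}$) in strictly convex position, for instance by taking $m+1$ distinct parameters on the moment curve $t\mapsto(t,t^2,\dotsc,t^d)$. For each $w\in W$, choose a hyperplane $F_w\ni q_w$ that strictly supports the polytope $\conv\{p,q_1,\dotsc,q_m\}$ at $q_w$, so that its defining affine functional $\ell_w$ (with $F_w=\{\ell_w=0\}$) is positive on $\{p\}\cup\{q_{w'}:w'\ne w\}$. Fix a $(d-1)$-representation $(A_v^w)_{v\in V}$ of $\Delta_w$ in $\R^{d-1}$, and using any affine isomorphism $\R^{d-1}\to F_w$ sending $0$ to $q_w$, embed a small scaled copy $\tilde A_v^w\subseteq F_w$ of $A_v^w$ inside a ball around $q_w$ of radius $\rho$. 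Take $\rho$ small enough that these balls are pairwise disjoint, miss $p$, and satisfy $\ell_{w'}>0$ throughout the ball around $q_w$ for every $w'\ne w$; this is possible by continuity since $\ell_{w'}(q_w)>0$. Finally set
\[
C_w \eqdef \conv\Bigl(\bigcup_v \tilde A_v^w\Bigr)\subseteq F_w
\quad\text{and}\quad
C_v \eqdef \conv\Bigl(\{p\}\cup\bigcup_w \tilde A_v^w\Bigr).
\]

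The verification then amounts to three observations. First, $p\in C_v$ for every $v$, so $p\in\bigcap_{v\in V}C_v$, making $V$ (and hence all of $2^V$) a face of the nerve. Second, $C_w$ and $C_{w'}$ lie in the disjoint balls around $q_w$ and $q_{w'}$, so no face of the nerve contains two distinct elements of $W$. Third, for $\tau\subseteq V$ one must show $\tau\cup\{w\}$ lies in the nerve precisely when $\tau\in\Delta_w$. The key step here is the identity $C_v\cap F_w=\tilde A_v^w$: any $x\in C_v$ is a convex combination $\mu p+\sum_{w'}\lambda_{w'}a^{w'}$ with $a^{w'}\in\tilde A_v^{w'}$, and applying $\ell_w$ gives $\mu\ell_w(p)+\sum_{w'}\lambda_{w'}\ell_w(a^{w'})$, a nonnegative combination of nonnegative terms that vanishes only when $\mu=0$ and $\lambda_{w'}=0$ for $w'\ne w$, forcing $x\in\tilde A_v^w$. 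Combined with $\tilde A_v^w\subseteq C_w\subseteq F_w$ this gives $C_v\cap C_w=\tilde A_v^w$, whence $\bigcap_{v\in\tau}(C_v\cap C_w)=\bigcap_{v\in\tau}\tilde A_v^w$ is nonempty exactly when $\tau\in\Delta_w$.

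The main obstacle is the geometric set-up: one has to choose the strictly supporting hyperplanes $F_w$ and then shrink the embedded $\tilde A_v^w$ into small enough neighborhoods of the $q_w$ so that each $\ell_{w'}$ remains positive on all the $\tilde A_v^w$ with $w'\ne w$. Once these separations are in place, the single-hyperplane calculation above forces $C_v\cap F_w$ to be exactly $\tilde A_v^w$, and all the remaining assertions reduce to the definitions.
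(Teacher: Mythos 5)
Your construction is correct and is essentially the paper's: both cone the sets $C_v$ over a common point $p$ and confine each $(d-1)$-representation of $\Delta_w$ to its own hyperplane that strictly separates it from $p$ and from the other representations, your supporting-hyperplane functionals $\ell_w$ just making explicit the separation the paper obtains by placing the representations inside distinct facets of a polytope containing $p$ in its interior. The only substantive (and easily repaired) difference is that the paper takes $C_w$ to be a fixed nonempty $(d-1)$-simplex containing all the $A_{w,v}$, whereas your $C_w=\conv\bigl(\bigcup_v \tilde A_v^w\bigr)$ is empty in the degenerate case where $\Delta_w$ has no vertices, even though $\{w\}$ is still a face of $\Delta$; adding $q_w$ to the hull fixes this without affecting the rest of your argument.
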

\begin{proof}
Pick a full-dimensional polytope $P$ in $\mathbb R^{d}$ with at least $|W|$ many facets, and choose a point~$p$ in the interior of~$P$. 
Let $\{C_w\mid w\in W\}$ be a collection of $(d-1)$-dimensional simplices, each contained in the interior of distinct facets of $P$.
For each $w\in W$, let $\{A_{w,v} \mid v\in V\}$ be a $(d-1)$-representation of $\Delta_w$ contained in $C_w$. 
Lastly, for each $v\in V$, define
\[
C_v \eqdef \conv\left(\{p\} \cup \bigcup_{w\in W} A_{w,v}\right).
\]

We claim that the collection $\{C_v\mid v\in V\}\cup \{C_w\mid w\in W\}$ is a $d$-representation of $\Delta$. 
Clearly all $C_v$ share a common point, namely $p$, and all $C_w$ are mutually disjoint from one another.
Thus it suffices to argue for each $w\in W$ and $\sigma\subseteq V$ that $C_w$ contains a point in $\bigcap_{v\in \sigma} C_v$ if and only if $\sigma$ is a face of $\Delta_w$. 
This follows by construction, since the intersection of $\{C_v\mid v\in V\}$ with $C_w$ is exactly the $(d-1)$-representation $\{A_{w,v}\mid v\in V\}$ of $\Delta_w$. 
\end{proof}

\begin{example}\label{ex:split-construction}
\Cref{fig:split-construction} shows the construction in \Cref{prop:split-construction} for the case $V = \{1,2,3\}$, $W =\nobreak \{4,5,6,7\}$ and $d=2$. 
Here each vertex in $W$ ``picks out'' a $1$-representable complex on the vertex set $\{1,2,3\}$. 
Explicitly, $\Delta_4$ is three isolated vertices, $\Delta_5$ is a $2$-simplex, $\Delta_6$ is the $1$-simplex on $\{2,3\}$, and $\Delta_7$ has facets $\{1,3\}$ and $\{2,3\}$.  
In the figure each $C_w$ is slightly thickened and shown in black, while $C_1, C_2,$ and $C_3$ are slightly transparent. 
\begin{figure}
\[
\includegraphics{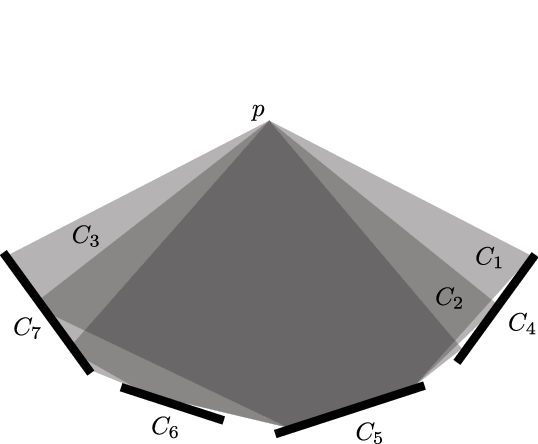}
\]
\caption{The construction used in \Cref{prop:split-construction}.}\label{fig:split-construction}
\end{figure}
\end{example}

\begin{proposition}\label{prop:split-enumeration}
For any $0\leq m\leq n$ we have $f_{d}(n) \ge f_{d-1}(m)^{n-m}$. 
\end{proposition}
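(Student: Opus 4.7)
The plan is to apply \Cref{prop:split-construction} with an arbitrary partition of the vertex set. Fix a subset $V\subseteq [n]$ of size $m$, and let $W\eqdef [n]\setminus V$, so $\abs{W}=n-m$. Given any function $w\mapsto \Delta_w$ that assigns to each $w\in W$ a $(d-1)$-representable complex $\Delta_w$ on vertex set $V$, \Cref{prop:split-construction} produces a $d$-representable complex
\[
\Delta = 2^V \cup \bigcup_{w\in W}(\Delta_w\ast w)
\]
on the vertex set $V\sqcup W = [n]$. There are exactly $f_{d-1}(m)^{n-m}$ such functions, since each of the $n-m$ vertices in $W$ may be assigned any of the $f_{d-1}(m)$ possible $(d-1)$-representable complexes on $V$ independently.

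The remaining task is to verify that distinct choices of $(\Delta_w)_{w\in W}$ yield distinct complexes $\Delta$. This is essentially immediate from the construction: for each $w\in W$, the complex $\Delta_w$ can be recovered from $\Delta$ as the subcomplex
\[
\Delta_w = \{\sigma\subseteq V \mid \sigma\cup\{w\}\in \Delta\},
\]
because the facets of $\Delta$ containing $w$ are exactly the sets of the form $\{w\}\cup F$ with $F$ a facet of $\Delta_w$, while no facet of $2^V$ contains $w$. Hence the map from tuples $(\Delta_w)_{w\in W}$ to $d$-representable complexes on $[n]$ is injective, yielding the inequality $f_d(n) \ge f_{d-1}(m)^{n-m}$.

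There is no serious obstacle here beyond checking the recovery of each $\Delta_w$ from $\Delta$; the heavy lifting was done in \Cref{prop:split-construction}.
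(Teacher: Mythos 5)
Your proof is correct and follows essentially the same route as the paper: partition $[n]$ into $V$ of size $m$ and $W$ of size $n-m$, apply \Cref{prop:split-construction}, and observe that each $\Delta_w$ is recoverable from $\Delta$ as the link of $w$ (your explicit formula $\{\sigma\subseteq V\mid \sigma\cup\{w\}\in\Delta\}$), so the assignment is injective. Nothing to add.
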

\begin{proof}
We will create $f_{d-1}(m)^{n-m}$ many distinct $d$-representable complexes on the vertex set~$[n]$. 
Define $V$ to be the first $m$ elements of $[n]$, and let $W$ be the last $n-m$ elements of~$[n]$. 
For each $w\in W$, let $\Delta_w$ be a $(d-1)$-representable complex on vertex set~$V$. 
Observe that we may recover $\Delta_w$ from the $d$-representable complex $\Delta$ formed in \Cref{prop:split-construction} (in particular, $\Delta_w$ is the link of the vertex $w$ in this complex). 
Thus every collection $\{\Delta_w\mid w\in W\}$ of $(d-1)$-representable complexes determines a unique $d$-representable complex on the vertex set~$[n]$. 
We have $f_{d-1}(m)$ choices for each $\Delta_w$, and since we have $n-m$ many vertices in $W$ the bound follows. 
\end{proof}

\begin{corollary}\label{cor:lower-bound}
For every $d\ge 1$ and sufficiently large $n$ we have
\[
\log f_d(n) \ge \frac{2}{d^d} n^d\log n + O(n^d). 
\]
\end{corollary}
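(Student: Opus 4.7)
The plan is to iterate \Cref{prop:split-enumeration} exactly $d-1$ times so as to reduce down to the one-dimensional case, and then invoke the lower bound from \Cref{thm:interval}. The base case $d=1$ is \Cref{thm:interval} itself, so I would assume $d\ge 2$ throughout.

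First I would unroll the recursion. For any nonincreasing sequence of nonnegative integers $n\ge m_1\ge m_2\ge\dotsb\ge m_{d-1}\ge 0$, applying \Cref{prop:split-enumeration} repeatedly gives
\[
f_d(n) \ge f_{d-1}(m_1)^{n-m_1} \ge f_{d-2}(m_2)^{(n-m_1)(m_1-m_2)} \ge \dotsb \ge f_1(m_{d-1})^{a},
\]
where $a \eqdef (n-m_1)(m_1-m_2)\cdots(m_{d-2}-m_{d-1})$. Taking logarithms and substituting $\log f_1(N)\ge 2N\log N - O(N)$ from \Cref{thm:interval}, I obtain a bound of the shape $\log f_d(n) \ge 2 P\log m_{d-1} - O(P)$, where $P \eqdef a\cdot m_{d-1}$.

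The second step is to optimize over the $m_i$. The $d$ factors making up $P$, namely $n-m_1,\, m_1-m_2,\, \dotsc,\, m_{d-2}-m_{d-1},\, m_{d-1}$, are nonnegative and telescope to $n$. By AM-GM, the product $P$ is therefore maximized when every factor equals $n/d$. Choosing $m_i = \lceil (d-i)n/d\rceil$ gives $m_{d-1} = \lceil n/d\rceil$ and $P = (n/d)^d\bigl(1+O(1/n)\bigr)$, which yields
\[
\log f_d(n) \ge 2(n/d)^d\log(n/d) - O(n^d) = \tfrac{2}{d^d}n^d\log n - O(n^d),
\]
as required.

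There is no genuinely hard step here: all of the content has already been packaged into \Cref{prop:split-enumeration} and \Cref{thm:interval}. The only minor technical point I would need to check is that the integer rounding of the $m_i$ only multiplies $P$ by $1+O(1/n)$, contributing an additive $O(n^d)$ error that is absorbed. A sanity check on the exponent: we are distributing $n$ vertices among $d$ ``slots'' (one for each level of the split construction), and each slot is used as an exponent in the product $f_1(n/d)^{(n/d)^{d-1}}$; the resulting $(n/d)^d\log(n/d)$ matches the claimed constant $2/d^d$.
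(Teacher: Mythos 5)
Your proposal is correct and is essentially the paper's argument: the paper proves the same bound by induction on $d$ using \Cref{prop:split-enumeration} with $m=\frac{d-1}{d}n$ at each step, which when unrolled gives exactly your choice $m_i=\frac{d-i}{d}n$ and the product $(n/d)^{d-1}$ of exponents. Your explicit AM--GM optimization is a nice way to see why this choice is best, but the substance is identical.
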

\begin{proof}
We proceed by induction on $d$.
The base case $d=1$ follows from \Cref{thm:interval}, which we will establish in \Cref{sec:interval}. 
Since the function $f_d(n)$ is increasing in $n$, it suffices to consider the case when $n$ is a multiple of~$d$. 
For the inductive step, let $m=\frac{d-1}{d}n$. 
Using \Cref{prop:split-enumeration} and the inductive hypothesis, we compute  
\begin{align*}
\log f_d(n) &\ge \log f_{d-1}(m)^{n-m}\\
& = \tfrac{n}{d}\log f_{d-1}(\tfrac{d-1}{d} n)\\
& \ge \tfrac{n}{d} \left(\tfrac{2}{(d-1)^{d-1}}  (\tfrac{d-1}{d} n)^{d-1}\log(\tfrac{d-1}{d} n) + O\bigl(n^{d-1}\bigr)\right)\\
& = \tfrac{n}{d} \cdot \tfrac{2}{(d-1)^{d-1}}\cdot\tfrac{(d-1)^{d-1}}{d^{d-1}} \cdot n^{d-1} \log n  + O(n^d)\\
& = \tfrac{2}{d^d} n^d\log n + O(n^d).\qedhere
\end{align*}
\end{proof}

\paragraph{Better lower bound in dimension~\texorpdfstring{$2$}{2}.}
The choice of the constant $\frac{d-1}{d}$ in the definition of $m$ above is optimal, and so $2/d^d$ is the best constant that can be deduced from the construction in \Cref{prop:split-enumeration}.
However, in \Cref{prop:beta2} below we will use a slightly different construction to obtain the constant $\frac{3}{4}$ when $d=2$, which is better than 
the constant $\frac{2}{2^2} = \frac{1}{2}$ from \Cref{cor:lower-bound}. One can use the inductive argument in \Cref{cor:lower-bound} to
prove the lower bound of $\frac{3}{d^d} n^d\log n+O(n)$, for $d\geq 2$, by using \Cref{prop:beta2} as the base case.
It would be interesting to determine the constant $\beta_2$ such that $\log f_2(n) = \beta_2n^2\log n + O(n)$. 

\begin{proposition}\label{prop:beta2}
For sufficiently large $n$ we have $f_2(n)\ge f_1(n/4)^{3n/2}$. 
In particular, it follows that $\log f_2(n) \ge \frac{3}{4} n^2\log n + O(n^2)$ for sufficiently large $n$. 
\end{proposition}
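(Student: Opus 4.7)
The plan is to enhance the split construction of \Cref{prop:split-construction} so that, in dimension two, each hub vertex encodes \emph{two} interval graphs on a common base set of size $n/4$, rather than one. With roughly $3n/4$ hubs, this produces $f_1(n/4)^{2 \cdot 3n/4} = f_1(n/4)^{3n/2}$ distinct configurations, which is precisely the exponent in the statement; by comparison, a direct application of \Cref{prop:split-enumeration} with $m=n/4$ would yield only $f_1(n/4)^{3n/4}$, losing a factor of two in the exponent.

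Concretely, I would partition $[n]$ (up to a bounded number of leftover vertices) into a base set $V$ of size $n/4$ and a hub set $W$ of size $3n/4$, and assign to each $w \in W$ an ordered pair $(\Delta_w^{T}, \Delta_w^{B})$ of interval graphs on~$V$. For the geometric realization, I would start from the setup of \Cref{prop:split-construction}: a convex polygon $P$ with $|W|$ facets, a central interior point $p$, and one hub region per facet. The key modification is that each hub $C_w$ is now a \emph{thin two-dimensional rectangle} near the corresponding facet of $P$, rather than a $1$-dimensional simplex on it. The two long sides of $C_w$ form a ``top rail'' and a ``bottom rail,'' along which I independently realize $1$-representations of $\Delta_w^{T}$ and $\Delta_w^{B}$. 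Each base set $C_v$ is then the convex hull of $p$ together with the pair of subintervals assigned to $v$ inside every $C_w$, just as in \Cref{prop:split-construction}.

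The main obstacle is to show that distinct assignments yield distinct nerves, which requires recovering both $\Delta_w^{T}$ and $\Delta_w^{B}$ from the resulting $2$-representable complex. To separate the two rails combinatorially, I would add a constant number of distinguished marker vertices $t$ and $b$, represented by convex sets designed to meet only the top rails or only the bottom rails of the hubs respectively; the links of $\{w,t\}$ and $\{w,b\}$ in the nerve then return the two interval graphs attached to each hub, since a subset $\sigma \subseteq V$ satisfies $\sigma \cup \{w,t\} \in \Delta$ precisely when the intersections $\{C_v \cap C_w\}_{v \in \sigma}$ share a common point on the top rail, which by construction happens iff $\sigma$ is a face of $\Delta_w^{T}$. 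A secondary difficulty is to check that the convex hulls defining $C_v$, $C_t$, and $C_b$ do not create spurious intersections between distant hubs, which can be arranged by placing $P$ and the hub rectangles in sufficiently generic position. Once these verifications are in place, the bound $f_2(n) \ge f_1(n/4)^{3n/2}$ follows (the constant correction from marker vertices being absorbed into the final $O(n^2)$ term), and taking logarithms and invoking \Cref{thm:interval} yields $\log f_2(n) \ge \tfrac{3}{4}n^2\log n + O(n^2)$.
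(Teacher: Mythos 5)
Your accounting of the exponent is right---you need $3n/2$ independently recoverable interval graphs on a ground set of size $n/4$---but the geometric mechanism you propose for packing two of them into each hub fails, and it fails precisely at the step you treat as immediate. Consider one hub $C_w$, a thin rectangle near a facet of $P$, with inner rail (the side facing $p$) carrying the representation $\{T_{w,v}\}_{v\in V}$ of $\Delta_w^{T}$ and outer rail carrying $\{B_{w,v}\}_{v\in V}$ of $\Delta_w^{B}$. The set $C_v$ is the convex hull of $p$, of $T_{w,v}\cup B_{w,v}$, and of segments in all the other hubs. Every chord from a point of $B_{w,v}$ to $p$ (or to any generator outside $C_w$) must cross the inner rail, so the trace of $C_v$ on the inner rail is not $T_{w,v}$: it contains $\conv(T_{w,v}\cup S_{w,v})$, where $S_{w,v}$ is the shadow of $B_{w,v}$ cast from $p$, which for a thin rectangle sits essentially directly above $B_{w,v}$. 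Hence the link of $\{w,t\}$ is the nerve of these merged intervals, not $\Delta_w^{T}$: taking $T_{w,1},T_{w,2}$ disjoint with $B_{w,1}$ placed below $T_{w,2}$ and $B_{w,2}$ below $T_{w,1}$ puts $\{1,2\}$ into that link even though $\{1,2\}\notin\Delta_w^{T}$. This is not a genericity issue that repositioning $P$ or the rectangles can cure; it is forced by convexity whenever a single convex set must meet both parallel rails of a thin hub and also reach far outside it. (The outer rail, by contrast, stays clean, since all other generators of $C_v$ lie on its inner side; so your construction recovers $\Delta_w^{B}$ but loses $\Delta_w^{T}$, and as written yields only $f_1(n/4)^{3n/4}$, no better than \Cref{prop:split-enumeration}.) The difficulty you do flag---spurious intersections between distant hubs---is real but secondary; the local interference between the two rails of one hub is the actual obstruction, and the proposal does not address it.

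The paper sidesteps this by never asking one host to carry two representations over the same base set. It uses a symmetric $4$-partite construction: four mutually tangent convex regions in the plane, each assigned $m=n/4$ vertices, with each of the six pairwise tangency segments perturbed into a convex arc carrying $m$ tiny secants in convex position. Each secant holds one interval representation of a graph on $[m]$; the hosting vertex's convex set is the hull of whole secants (one per incident interface), while the participating vertices' sets are hulls of one subsegment from each relevant secant, and explicit convex-position conditions guarantee that every trace is exactly the intended interval. This produces $6m=3n/2$ independently recoverable interval graphs---the same count you aim for---with every vertex serving simultaneously as host and as participant. If you wish to rescue a hub-based doubling, you would need to put the two representations on two disjoint sub-arcs of a strictly convex hub boundary, both directly visible from $p$ and from every other hub, and then prove trace-cleanliness by exactly the kind of convex-position argument the paper carries out; with parallel rails and an interior apex it cannot work.
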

\begin{proof}
Since $f_2(n)$ is increasing in $n$, it suffices to prove the result when $n$ is a multiple of $4$, say $n=4m$.
We wish to construct $f_1(n/4)^{3n/2} = f_1(m)^{6m}$ distinct $2$-representable complexes. 
Our starting point is an arrangement of closed convex sets $W$, $X$, $Y$ and $Z$ in the plane, shown in \Cref{fig:beta2}. 
These sets have disjoint interiors, and any pair shares a line segment along the boundary.
These line segments are labeled $A$, $B$, $C$, $D$, $E$ and $F$ in the figure.

\begin{figure}
\[
\includegraphics{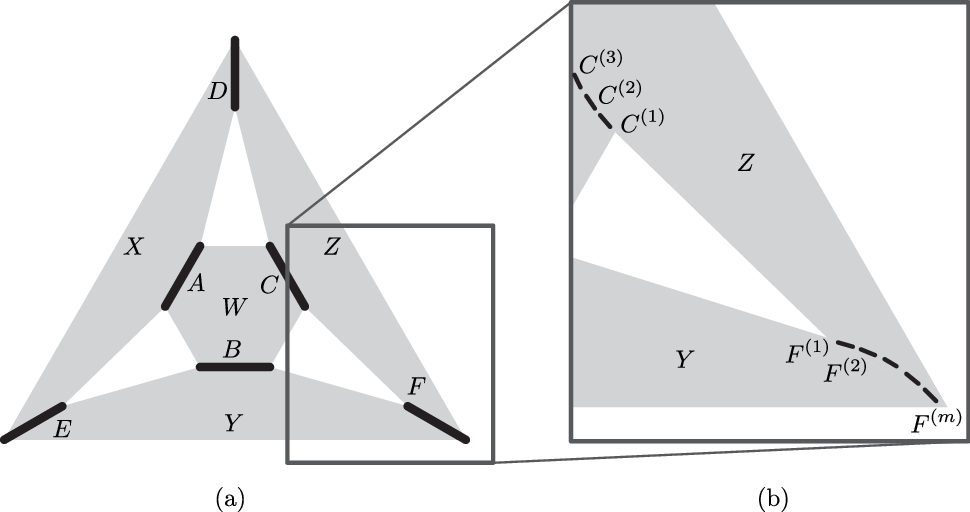}
\]
\caption{(a) The convex regions $W$, $X$, $Y$ and $Z$ in $\R^2$ used in the proof of \Cref{prop:beta2}.\\\hskip47.99pt (b) The small secants chosen near each arc.}\label{fig:beta2}
\end{figure}

Consider replacing each segment by a smooth arc, oriented so that the arc bends closer to the corresponding label in the figure. 
Choose $m$ arbitrarily small disjoint line segments $A^{(1)}, A^{(2)}, \ldots, A^{(m)}$ whose endpoints lie on the arc~$A$.
Note that these segments are in convex position, in the sense that they form a subset of the edges of a convex polygon.
Similarly, choose $m$-many small disjoint line segments in convex position with endpoints along $B$, $C$, $D$, $E$ and $F$. 
We thus obtain $6m$ line segments in total. Since these line segments are secants to the arcs $A$ through $F$, respectively,
we call them simply \emph{secants}.

By making our arcs sufficiently close to the original line segments and making the secants sufficiently small, we can guarantee the following properties:
\begin{itemize}
\item[(i)] For any $i\in [m]$, the convex hull of $A^{(i)}\cup B^{(i)}\cup C^{(i)}$ does not contain a point in any secant except for $A^{(i)}$, $B^{(i)}$ and~$C^{(i)}$.
\item[(ii)] For any $i\in [m]$, the secant $D^{(i)}$ together with all $A^{(j)}$s and all $E^{(k)}$s forms a collection of $2m+1$ disjoint line segments in convex position. 
 \item[(iii)] Condition (ii) holds, but with $(D, A, E)$ replaced by the triple $(E, B, F)$. Similarly, it also holds with $(D,A,E)$ replaced by $(F, C, D)$. 
\end{itemize}
Now, for every $i\in [m]$, fix a tuple $(\Delta_{A,i}, \Delta_{B,i},\ldots, \Delta_{F,i})$ of $1$-representable complexes on the vertex set~$[m]$. 
Place a $1$-representation of $\Delta_{A,i}$ inside the secant~$A^{(i)}$.
Let $A^{(i)}_1,\dotsc,A^{(i)}_{m}\subset A^{(i)}$ be the line segments in the representation
of~$\Delta_{A,i}$. Likewise, place $1$-representations of all other complexes inside their corresponding secants, denoting them
by $B^{(i)}_{j},C^{(i)}_{j}$ and so forth. 
We obtain $6m$ $1$-representations arranged in the plane, with $m$-many near each of the segments $A,B,C, D, E$ and~$F$. 

We now define a $2$-representation of a complex $\Delta$ on the vertex set 
\[
\{w_1,\ldots, w_m, x_1, \ldots, x_m, y_1,\ldots, y_m,z_1,\ldots, z_m\},
\]
which can be regarded as a relabeling of $[n]$. 
For any $i\in [m]$, define $C_{w_i} \eqdef \conv(A^{(i)}\cup B^{(i)}\cup C^{(i)})$. 
Moreover, define convex sets 
\begin{align*}
  C_{x_i}&\eqdef \conv \Bigl(D^{(i)}\cup \bigcup_{j=1}^m A^{(j)}_{i} \cup \bigcup_{j=1}^m E^{(j)}_{i}\Bigr),\\
  C_{y_i}&\eqdef \conv \Bigl(E^{(i)}\cup \bigcup_{j=1}^m B^{(j)}_{i} \cup \bigcup_{j=1}^m F^{(j)}_{i}\Bigr),\\
  C_{z_i}&\eqdef \conv \Bigl(F^{(i)}\cup \bigcup_{j=1}^m C^{(j)}_{i} \cup \bigcup_{j=1}^m D^{(j)}_{i}\Bigr).
\end{align*}

Now, by properties (i) and (ii) above, the sets $C_{x_1}\cap C_{w_i},\dotsc,C_{x_m}\cap C_{w_i}$ form a $1$-representation~$\Delta_{A,i}$.
Thus we may recover $\Delta_{A,i}$ from $\Delta$: it is the link of $w_i$, restricted to the vertex set $\{x_1,\ldots, x_m\}$. 
Similar reasoning allows us to recover $\Delta_{B,i},\ldots, \Delta_{F,i}$ (for example, $\Delta_{F,i}$ is the link of $z_i$ restricted to the vertex set $\{y_1,\ldots, y_m\}$). 

Thus every choice of $m$-many $6$-tuples $(\Delta_{A,i}, \Delta_{B,i},\ldots, \Delta_{F,i})$ of $1$-representable complexes on the vertex set~$[m]$ yields a unique $2$-representable complex on vertex set~$[n]$. 
There are $f_1(m)^{6m}$ ways to choose $m$-many such $6$-tuples, and so the first bound follows.
The bound $\log f_2(n) \ge \frac{3}{4}n^2\log n + O(n)$ follows by taking logarithms and applying \Cref{thm:interval}. 
\end{proof}

\section{Interval graphs and interval orders: Proof of \texorpdfstring{\Cref{thm:interval}}{Theorem 1}}\label{sec:interval}

To prove \Cref{thm:interval} we will relate the number of interval graphs to the number of interval orders. 
Given a tuple $(I_1, \ldots, I_n)$ of closed intervals, its \emph{interval order} is the partial order on $[n]$ where $i < j$ if and only if $I_i$ is completely to the left of $I_j$. 
Observe that one may recover the interval graph associated to a tuple from the interval order.
Indeed, $i$ and $j$ are incomparable if and only if their intervals overlap, and so the interval graph is the incomparability graph of the interval order. 

Surprisingly, it is possible to go in the opposite direction: to turn an interval graph into an interval order.
That leads to to the following result, which shows that $f_1(n)$, the number of interval graphs,
is very close to the number of interval orders.
\begin{theorem}[Sandwich theorem]\label{thm:order-sandwich}
Let $g(n)$ denote the number of interval orders on $[n]$. 
Then
\[
g(n-2)\le f_1(n) \le g(n). 
\]
\end{theorem}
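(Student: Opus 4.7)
I plan to prove the two inequalities separately, since they require very different arguments.

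\emph{Upper bound $f_1(n) \le g(n)$.} Every interval graph on $[n]$ is the incomparability graph of at least one interval order on $[n]$: simply fix any interval representation of the graph and take the associated interval order. The assignment that sends an interval order to its incomparability graph is therefore a surjection from the set of interval orders on $[n]$ onto the set of interval graphs on $[n]$. Surjectivity gives $f_1(n) \le g(n)$ at once.

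\emph{Lower bound $g(n-1) \le f_1(n)$.} The plan is to construct an explicit injection $\phi$ from interval orders on $[n-1]$ to interval graphs on $[n]$. Given an interval order $P$ on $[n-1]$, I would take a canonical interval representation (say, with distinct integer endpoints in $\{1,\dotsc,2(n-1)\}$) and adjoin a new interval $I_n$ whose location is canonically determined by $P$'s representation---for example, a half-line of the form $(-\infty,r]$ where $r$ is placed just past the minimum right-endpoint of $P$. The interval graph on $[n]$ produced by this augmented collection is $\phi(P)$.

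The hard part will be verifying injectivity. The induced subgraph of $\phi(P)$ on $[n-1]$ recovers the incomparability graph of $P$, but this graph alone cannot distinguish interval orders that share an incomparability relation. The new vertex $n$ must therefore carry the missing orientation information: its neighborhood $N(n)\subseteq [n-1]$ depends on $P$ through the canonical choice of $I_n$, and combined with the induced graph it must uniquely identify $P$. The delicate step is to design $I_n$ explicitly enough that this recovery can actually be carried out---likely by arranging that $N(n)$ pick out a canonical distinguished vertex of $P$ (such as the one with smallest right-endpoint) and then invoking a structural lemma about interval orders to reconstruct the remaining order, perhaps by induction on $n$. If this decoding can be pushed through, the resulting $\phi$ is injective and the bound $g(n-1)\le f_1(n)$ follows.
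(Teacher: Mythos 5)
Your upper bound is correct and is exactly the paper's (one-line) argument. The lower bound, however, has a genuine gap, and the specific map you propose cannot work. If you leave the canonical representation of $P$ untouched and merely adjoin one new interval $I_n$, then the induced subgraph of $\phi(P)$ on $[n-1]$ is just the incomparability graph of $P$, and the only additional information in $\phi(P)$ is the single neighborhood $N(n)$. That is far too little. Concretely, take $P$ to be any linear order on $[n-1]$: its canonical representation consists of $n-1$ pairwise disjoint intervals, the induced graph on $[n-1]$ is empty, and your $I_n=(-\infty,r]$ with $r$ just past the minimum right endpoint meets only the interval of the minimum element of $P$. So $\phi(P)$ is determined by the minimum element alone, giving at most $n-1$ distinct images for the $(n-1)!$ linear orders. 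No choice of a single added interval can repair this, since the images of all linear orders would be graphs on $[n]$ whose restriction to $[n-1]$ is empty, and there are fewer than $2^{n-1}$ such graphs versus $(n-1)!$ preimages. You correctly sense that "the new vertex $n$ must carry the missing orientation information," but one vertex cannot carry $\log((n-1)!)$ bits' worth of it.

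The paper's fix is to encode the order into the intervals of $[n-1]$ themselves rather than into the new vertex. Starting from a ``compressed'' representation in which left endpoints sit at odd integers and right endpoints at even integers of $[2n-2]$, each interval $[L(i),R(i)]$ is replaced by $J_i=[L(i),R(i)+2]$, so that every interval is stretched to reach the next block of left endpoints. The resulting graph on $[n-1]$ is therefore \emph{not} the incomparability graph of $P$: it contains extra edges from which one can read off, iteratively, the successive blocks of coincident left and right endpoints (in the partially decoded graph the neighborhoods of the currently ``active'' vertices are nested, the minimal neighborhood identifies which intervals end next, and its complement identifies which intervals start next). The added interval $J_n=[0,1]$ plays only the modest role of anchoring the recursion by marking the leftmost block. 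If you want to salvage your approach, this perturbation of the representation --- not the placement of $I_n$ --- is the missing idea.
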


The upper bound in Theorem \ref{thm:order-sandwich} is immediate.
For the lower bound, we construct an injective map \[
\{\text{Interval orders on $[n-2]$}\}\,\, \longrightarrow\,\, \{\text{Interval graphs on $[n]$}\}.
\]
\paragraph{Compressed representations.} 
To start, we fix a convenient representation for each interval order on~$[n-2]$.
Namely, a representation is \emph{compressed} if the following three conditions hold:
\begin{itemize}
\item[(i)] Consecutive left endpoints (respectively, right endpoints) are the same.
\item[(ii)] The endpoints take values in $[2n-4]$. 
\item[(iii)] The subset of integers in $[2n-4]$ that are endpoints of some interval form a consecutive sequence starting at 1. 
\end{itemize}
Note that we can easily obtain a compressed representation from any representation by first sliding consecutive endpoints of the same type together, then applying a monotone rescaling and shift to achieve (ii) and (iii), none of which will change the interval order being represented.
We will regard a compressed representation as a pair of functions $L\colon [n-2]\to [2n-4]$ and $R\colon [n-2]\to [2n-4]$, where $L(i)$ (respectively, $R(i)$) is the coordinate of the left (respectively, right) endpoint of the $i$-th interval. 
In a compressed representation, condition (iii) guarantees that left endpoints only occur at odd values, while right endpoints only occur at even values. 
With this in mind, it will also be convenient to regard a compressed representation as a pair of lists $\mathcal A = (A_1, A_2, \ldots, A_{n-2})$ and $\mathcal B = (B_1, B_2, \ldots, B_{n-2})$ where
\begin{align*}
A_i &\eqdef \{j\in[n-2]\mid L(j) = 2i-1\},\text{ and}\\
B_i &\eqdef\{j\in[n-2] \mid R(j) = 2i\}. 
\end{align*}
In other words, $\mathcal A$ and $\mathcal B$ record the different endpoints which appear at the various coordinates in $[2n-4]$.
Note that we may determine $(L,R)$ from $(\mathcal A, \mathcal B)$ and vice versa. 
Also note that because of condition (iii), if $A_i$ is empty, then so are $B_i$ and $A_{i+1}$, and if $B_i$ is empty, then so are $A_{i+1}$ and $B_{i+1}$. 

\paragraph{The injective map.}
Once we have fixed a compressed representation for every interval order on~$[n-2]$, we are ready to encode interval orders into interval graphs. 
Let $(L, R)$ be a compressed representation of an interval order on $[n-2]$, and let $m \eqdef \max\{R(i)\mid i\in[n-2]\}$. 
We define new intervals $J_1, J_2, \ldots, J_n$ where 
\begin{align*}
J_i &\eqdef [L(i), R(i)+2] \quad\text{ for $i\in[n-2]$,} \\
J_{n-1}&\eqdef [0,1]\text{, and}\\
J_n &\eqdef [m+1, m+2]. 
\end{align*} 
Finally, let $G$ be the interval graph represented by $\mathcal J = (J_1, J_2, \ldots, J_n)$. 
We claim that we can recover $(L, R)$ from $G$. 
In fact, we will recover $(\mathcal A, \mathcal B)$ iteratively.

For any $i\in[n-2]$, let $G_i$ be the graph obtained from $G$ by deleting $n-1$, and all vertices in $\bigcup_{j=1}^{i-1} B_j$. 
In particular, $G_1$ is the result of deleting only the vertex $n-1$.
Geometrically, $G_i$ is the interval graph obtained by deleting all intervals in $\mathcal J$ which lie strictly to the left of $2i+1$, except possibly $J_n$.

\begin{proposition}
Fix $i\in[n-3]$, and let $A$ be the set of vertices in  $\bigcup_{j=1}^{i}A_j$ which appear in $G_i$. 
For each $v\in A$, let $N(v)$ be the closed neighborhood of $v$ in $G_i$. 
If $n$ is not the only vertex in $G_i$, then \begin{itemize}
\item[(i)] $A$ is not empty,
\item[(ii)] the various $N(v)$ for $v\in A$ are totally ordered by containment, so there is a unique inclusion-minimal such neighborhood $N$, 
\item[(iii)] $B_i$ is the set of vertices $v\in A$ with $N(v) = N$, and 
\item[(iv)] $A_{i+1}$ is equal to $N\setminus (A\cup\{n\})$. 
\end{itemize}
\end{proposition}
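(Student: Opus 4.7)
The plan is to translate each claim into the geometric picture of the $J_k$'s and read off everything from the values of $L$ and $R$. The key observation is: a vertex $k$ lies in $A$ exactly when $L(k)\le 2i-1$ and $k\in V(G_i)$, and the latter (together with $k\in[n-1]$) is equivalent to $R(k)\ge 2i$ (because $V(G_i)=[n]\setminus(\{n\}\cup B_1\cup\dotsb\cup B_{i-1})$, and $R$ takes only even values). Thus every $J_k$ with $k\in A$ contains $[2i-1,2i+2]$, so $A$ is a clique in $G_i$. Moreover, for any $k\in A$ and any $l\in V(G_i)$, the condition $L(k)\le R(l)+2$ is automatic (since $L(k)\le 2i-1<2i+2\le R(l)+2$), so $J_l$ and $J_k$ intersect iff $L(l)\le R(k)+2$. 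Consequently the closed neighborhood decomposes as
\[
N_{G_i}[J_k]=\{J_l\in V(G_i)\mid L(l)\le R(k)+2\},
\]
which depends only on $R(k)$.

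With this formula in hand, (ii), (iii), (iv) are straightforward. Since $N_{G_i}[J_k]$ grows weakly with $R(k)$, the (closed) neighborhoods of vertices in $A$ form a chain under inclusion, proving (ii). For $k\in A$ we have $R(k)\ge 2i$, with equality exactly when $k\in B_i$ (and conversely, $k\in B_i$ forces $L(k)\le 2i-1$ so $k\in\bigcup_{j\le i}A_j$, hence $k\in A$). So the inclusion-minimal neighborhood is
\[
N=\{J_l\in V(G_i)\mid L(l)\le 2i+2\},
\]
realized by precisely the vertices in $B_i$, giving (iii). For (iv), note $A=\{J_l\in V(G_i)\mid L(l)\le 2i-1\}$, and since $L$ takes only odd values,
\[
N\setminus A=\{J_l\in V(G_i)\mid L(l)=2i+1\}.
\]
This is exactly $A_{i+1}$, because every $k\in A_{i+1}$ has $L(k)=2i+1$ and therefore $R(k)\ge 2i+2$, so it survives in $V(G_i)$.

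For (i), suppose $G_i$ is nonempty but $A$ is empty. Then every $k\in V(G_i)$ has $L(k)\ge 2i+1$, so some $A_j$ with $j\ge i+1$ is nonempty. By the consecutivity condition on a compressed representation (if $A_i$ were empty, then so would be $A_{i+1}$, $A_{i+2}$, \dots), $A_i$ itself is nonempty. Any $k'\in A_i$ then has $L(k')=2i-1$ and $R(k')\ge 2i$, so $k'\in V(G_i)$ and $k'\in A$, contradicting $A=\emptyset$.

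The only subtlety is reading ``neighborhood'' as closed neighborhood: two distinct vertices of $B_i$ are adjacent twins and share a closed neighborhood but not an open one, so (iii) as stated requires the closed-neighborhood convention. Once that is fixed the argument is essentially a bookkeeping exercise in the $L,R$ coordinates, and the consecutivity condition from the definition of ``compressed'' does all the work in (i).
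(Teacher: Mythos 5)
Your argument follows essentially the same route as the paper's: translate membership in $A$ and adjacency in $G_i$ into conditions on $L$ and $R$, observe that the (closed) neighborhood of $k\in A$ in $G_i$ is $\{l\in V(G_i)\mid L(l)\le R(k)+2\}$ and hence depends monotonically on $R(k)$, and invoke the consecutivity property of compressed representations for the nonemptiness claims. Your remark about closed versus open neighborhoods is a correct and worthwhile clarification: the paper's formula for ``the neighborhood'' of $v$ includes $v$ itself, so it is implicitly using the closed convention, and (ii) would indeed fail for open neighborhoods whenever $|B_i|\ge 2$.

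There is one small but genuine gap, in (iii). From ``$R(k)\ge 2i$ for all $k\in A$, with equality exactly when $k\in B_i$'' you conclude that the inclusion-minimal neighborhood is $N=\{l\in V(G_i)\mid L(l)\le 2i+2\}$, realized precisely by $B_i$. That conclusion requires the value $R(k)=2i$ to actually be attained on $A$, i.e.\ $B_i\neq\emptyset$; if $B_i$ were empty, the minimal neighborhood would be realized at the true minimum of $R$ over $A$, which would be some larger even number, and (iii) would assert that this nonempty set equals the empty set $B_i$. The fix is the same consecutivity argument you already use for (i), applied to the $B$-side: if $B_i=\emptyset$ then $A_{i+1},B_{i+1},\dots$ are all empty, so every vertex of $[n-1]$ has $R\le 2(i-1)$ and lies in $B_1\cup\dots\cup B_{i-1}$, forcing $G_i=\emptyset$. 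This is in fact exactly how the paper opens its proof of (i): it first establishes $B_i\neq\emptyset$ and deduces $A\neq\emptyset$ from that, whereas you prove $A\neq\emptyset$ directly via the $A_j$'s and never record the fact about $B_i$ that (iii) silently needs. Adding that one sentence makes your proof complete.
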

\begin{proof}
To prove item (i), observe that since $G_i$ contains a vertex not equal to $n$, the set $B_i$ must be nonempty. 
Any $v\in B_i$ is a vertex of $G_i$, and has $L(v) < R(v)  = 2i$, which implies $v\in A$.  

For item (ii), observe that $A$ is exactly the set of $v\in [n-2]$ with $L(v)\le 2i-1$ and $R(v)\ge 2i$. 
Given $v\in A$, the right endpoint of $J_v$ is $R(v)+2$.
Among the various vertices $u\in[n-2]$ that appear in $G_i$, we see that $J_v$ intersects $J_u$ if and only if $R(v) + 2 > L(u)$. 
This happens if and only if $u\in A$ or $u\in A_j$ for some $j$ satisfying $i+1\le j\le (R(v)+2)/2$. 
Consequently, for each $v\in A$ the set $N(v)$ is exactly $A\cup \bigcup_{j={i+1}}^{j=(R(v)+2)/2} A_j$, where we use the convention $A_{(m+2)/2} \eqdef \{n\}$ (due to intersection with $J_n$). 
Thus the various neighborhoods of vertices in $A$ are totally ordered by containment, with strictly larger neighborhoods arising from strictly larger values of $R(v)$. This proves (ii).

From the above observations, we have $N(v) = N$ if and only if $R(v) = 2i$, that is, if and only if $v\in B_i$. 
At the beginning of the proof we observed that if $v\in B_i$ then $v\in A$. 
This proves (iii).
Finally, the formula above implies that $N = A \cup A_{i+1}$, plus possibly the vertex $n$. 
This union is disjoint, so $N\setminus (A\cup\{n\}) = A_{i+1}$, proving (iv).
\end{proof}
The proposition above implies that if we know $G$, $A_1, \ldots, A_i$, and $B_1,\ldots, B_{i-1}$ (and hence also~$G_i$), then we may compute $A_{i+1}$ and $B_i$. 
Note that $A_1$ is equal to the neighborhood of $n-1$ in $G$.
Thus, given only $G$, we may first compute $A_1$, and then apply the proposition repeatedly to obtain $A_1,\ldots, A_{n-2}$ and $B_1,\ldots, B_{n-3}$.
Finally, $B_{n-2}$ will simply consist of the vertices missing from all of the already-computed $B_i$.
Thus we recover $(\mathcal A, \mathcal B)$ from $G$, and hence recover our original interval order.
This proves that our map from interval orders to interval graphs is injective, as desired,
concluding the proof of \Cref{thm:order-sandwich}.

\paragraph{Proof of Theorem \ref{thm:interval}.}
Brightwell and Keller \cite[Theorem~6]{brightwell_keller}, building upon earlier works of Zagier \cite{zagier} and Bousquet-M\'elou, Claesson, Dukes and Kitaev \cite{bcdk}, proved that the number $g(n)$ of interval orders on $[n]$ satisfies 
\[
g(n) \sim (n!)^2\sqrt{n}\left(\frac{6}{\pi^2}\right)^n \left( E_0 + \frac{E_1}{n} + \frac{E_2}{n^2} + \cdots \right)
\]
where the $E_i$ are constants. 
By applying Stirling's approximation and simplifying, we obtain
\[
g(n) = e^{2n\log n - (2+ \log(\pi^2/6))n +O(\log n)}.
\]
Replacing $n$ by $n-2$, we see that the same asymptotic holds for $g(n-2)$. 
Thus by Theorem \ref{thm:order-sandwich} we obtain the same asymptotic equality for $f_1(n)$, proving Theorem \ref{thm:interval}. 

\section{Collapsible complexes: proof of \texorpdfstring{\Cref{thm:d-collapsible}}{Theorem 3}}
In the proof of \Cref{thm:complex} we constructed many $d$-representable complexes by starting with 
the case $d=1$ and using induction on $d$. For the inductive step, we merged several $(d-1)$-representable
complexes on the same vertex set into a single $d$-representable complex. It turns out that the very same
operation turns a collection of $(d-1)$-collapsible complexes into a single $d$-collapsible complex.

The following is the direct analogue of \Cref{prop:split-construction} for collapsible complexes.
\begin{proposition}\label{prop:collapsible-split-construction}
Let $V$ and $W$ be disjoint sets. 
For each $w\in W$, let $\Delta_w$ be a $(d-1)$\nobreakdash-collapsible complex on vertex set~$V$. 
Then the simplicial complex 
\[
\Delta \eqdef 2^V \cup \bigcup_{w\in W} (\Delta_w\ast w)
\]
on the vertex set $V\sqcup W$ is $d$-collapsible.
\end{proposition}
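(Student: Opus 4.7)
My plan mirrors the representability argument in \Cref{prop:split-construction}: I will process the vertices $w \in W$ one at a time, each time using the $(d-1)$-collapsibility of $\Delta_w$ to carve away the faces of $\Delta$ containing $w$ via a sequence of $d$-collapses. Once every face involving a vertex of $W$ has been removed, what remains is the full simplex $2^V$, which can then be $d$-collapsed to the empty complex by, for example, deleting the vertices of $V$ one by one (each is a free face contained in the unique current facet).

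The heart of the argument is the procedure for a single $w \in W$. Fix a sequence $\tau_1, \ldots, \tau_N$ of free faces (each of dimension at most $d-2$) that realizes a $(d-1)$-collapse of $\Delta_w$ to the empty complex. I will show that the shifted sequence $\tau_1 \cup \{w\}, \ldots, \tau_N \cup \{w\}$ is a valid sequence of $d$-collapses in $\Delta$, and that performing them deletes precisely the faces of $\Delta$ containing $w$. The dimension constraint is automatic, since $\dim(\tau_i \cup \{w\}) \le d-1$. For freeness, the key observation is that every coface of $\tau_i \cup \{w\}$ must contain $w$, and the only faces of $\Delta$ containing $w$ are of the form $\sigma \cup \{w\}$ with $\sigma$ in the current state of $\Delta_w$. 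Thus the facets of the current complex containing $\tau_i \cup \{w\}$ correspond bijectively to the facets of the current $\Delta_w$ containing $\tau_i$ via $\sigma \cup \{w\} \leftrightarrow \sigma$. Freeness of $\tau_i$ in the current $\Delta_w$ then transfers directly to freeness of $\tau_i \cup \{w\}$ in the current $\Delta$, and the cofaces deleted in $\Delta$ are exactly the cofaces deleted in $\Delta_w$ with $w$ appended. Consequently the $w$-slice of $\Delta$ evolves in lockstep with $\Delta_w$ and ultimately becomes empty.

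Different vertices of $W$ do not interfere with each other, because cofaces of any face containing $w$ never involve a different $w' \in W$; thus the vertices of $W$ may be processed in any order. There is no serious obstacle in this proof, only the bookkeeping needed to verify that freeness transfers correctly between $\Delta_w$ and $\Delta$ as the ambient complex changes; apart from this, the argument is a direct translation of the proof of \Cref{prop:split-construction} into the language of collapses.
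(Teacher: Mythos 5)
Your proposal is correct and follows essentially the same route as the paper's proof: append $w$ to each free face of a $(d-1)$-collapsing sequence of $\Delta_w$ to collapse away all faces containing $w$, handle each $w\in W$ independently, and finish by collapsing the remaining simplex $2^V$. The extra bookkeeping you supply on how freeness transfers is a correct elaboration of what the paper leaves implicit.
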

\begin{proof}
For each $w$, fix a $(d-1)$-collapsing sequence of $\Delta_w$.
By adding $w$ to the free faces in this sequence, we obtain a sequence of $d$-collapses from $\Delta_w\ast w$ to $\Delta_w$.
Since the only faces of $\Delta$ that contain $w$ are those in $\Delta_w\ast w$, we may perform this sequence of $d$-collapses in $\Delta$ to remove all the faces containing $w$. 
After doing this for every $w\in W$, we are left with the simplex $2^V$, which is $d$-collapsible. 
Thus $\Delta$ is $d$-collapsible. 
\end{proof}

We use the preceding result to construct many $d$-collapsible complexes by starting with many $1$\nobreakdash-collapsible complexes.
\begin{collapsethmrestated}
The number $g_d(n)$ of $d$-collapsible complexes on the vertex set $[n]$ satisfies
  \[
  \tfrac{1}{(d+1)^{d+1}}n^{d+1} + O(n^d) \le \log_2 g_d(n) \le \tfrac{1}{(d+1)!} n^{d+1} + O(n^d). 
  \]
In particular, $g_d(n) = e^{\Theta(n^{d+1})}$. 
\end{collapsethmrestated}
\begin{proof}[Proof of the lower bound]
Since $g_d(n)$ is increasing in $n$, it suffices to prove that 
\begin{equation}\label{eq:collapselower_induct}
  \log_2 g_d(n)\geq \frac{1}{(d+1)^{d+1}}n^{d+1}\qquad\text{when }n\text{ is divisible by }d+1. 
\end{equation}
We prove this by induction on~$d$. When $d=1$, $g_d(n)$ is the number of labeled chordal graphs on the vertex set~$[n]$, 
since $1$\nobreakdash-collapsible complexes are exactly the clique complexes of chordal graphs.
Recall that a \emph{split graph} is a graph that can be partitioned into a clique and an independent set (with the edges between being arbitrary).
Split graphs are chordal, and one may construct $2^{n^2/4}$ split graphs on the vertex set $[n]$ by forming a clique on the first half of the vertices and adding arbitrary edges between the first and second 
halves. This proves \eqref{eq:collapselower_induct} when~$d=1$. 

Assume $d\ge 2$. Let $V$ be the first $\frac{d}{d+1}n$ vertices, and let $W$ be the remaining $\frac{1}{d+1}n$ vertices. 
For each vertex $w\in W$, let $\Delta_w$ be a $(d-1)$-collapsible complex on vertex set $V$.
Observe that we may recover $\Delta_w$ from the $d$-collapsible simplicial complex $\Delta$ defined in \Cref{thm:d-collapsible}, namely $\Delta_w$ is the link of $w$ in this complex.
Thus we obtain a unique $d$-collapsible complex on the vertex set $[n]$ for every choice of the various $\Delta_w$.
This implies that $g_d(n) \ge g_{d-1}(\tfrac{d}{d+1} n)^{n/(d+1)}$.
Taking logarithms and applying the inductive hypothesis, we obtain the lower bound as follows: 
\[
\log_2 g_d(n) \ge \log_2\left(g_{d-1}(\tfrac{d}{d+1} n)^{n/(d+1)}\right)
 \ge \tfrac{n}{d+1} \cdot \tfrac{1}{d^d} \tfrac{d^d}{(d+1)^d} n^d 
 = \tfrac{1}{(d+1)^{d+1}}n^{d+1}.\qedhere
\]
\end{proof}
\begin{proof}[Proof of the upper bound]
It is well-known (see for example the discussion in \cite{matousek_tancer_gap}) that $d$-collapsible complexes satisfy Helly's theorem,
and so are determined by their $d$-skeleta. Since the $d$-skeleton of an $n$-vertex complex can be specified by which of the  
$N\eqdef \binom{n}{d+1} +\binom{n}{d} + \cdots + \binom{n}{0}$ possible faces belong to it, it follows the number of $d$-collapsible
complexes on~$[n]$ is at most~$2^N$.
\end{proof}

\section{Problems and remarks}
\begin{itemize}
\item Bender, Richmond and Wormald \cite{bender_richmond_wormald} proved that almost all chordal graphs are split,
and so the lower bound on the number of $1$-collapsible complexes in \Cref{thm:d-collapsible} is tight. We suspect
that the lower bound remains tight also in higher dimensions, i.e., that $\log_2 g_d(n) = \frac{1}{(d+1)^{d+1}} n^{d+1} + O(n^d)$ for all $d\ge 1$. 
\item The lower bound in the Sandwich Theorem (\Cref{thm:order-sandwich} in \Cref{sec:interval}) can be improved  to $\tfrac{n(n-1)}{2} g(n-2)\le f_1(n)$. 
To obtain this bound one must first note that the interval graph we construct has an essentially unique representation by intervals, up to reflection about a common point. 
This follows from the fact that the interval graph has no ``buried subgraphs'' in the sense of Hanlon~\cite{hanlon}.
Thus even when we regard the graph as unlabeled, this unique representation allows us to recognize the vertices $n$ and $n-1$ as the outermost intervals, but not to tell these two vertices apart.
Hence, by choosing which two elements of $[n]$ label these vertices,
we can upgrade the injective map in the proof of Sandwich Theorem to a map
\[
 \{\text{Interval orders on $[n-2]$}\}\times \{(i,j)\in [n]^2\mid i\neq j\} \,\, \longrightarrow\,\, \{\text{Interval graphs on $[n]$}\}
\]
that is at most $2$-to-$1$.
\item Our results easily extend to the \emph{unlabeled} case. Indeed, the proof of Sandwich Theorem yields an injective map
\[
  \{\text{Unlabeled interval orders on $[n-2]$}\}\,\, \longrightarrow\,\, \{\text{Doubly-rooted interval graphs on $[n]$}\}.
\]
So, writing $\overline{f_d}(n)$ and $\overline{g}(n)$ for the unlabeled versions of $f_d(n)$ and $g(n)$, we obtain the inequality
$\frac{1}{n(n-1)}\overline{g}(n-2)\leq \overline{f_1}(n)\leq \overline{g}(n)$. As above, the factor of $\frac{1}{n(n-1)}$ can
be upgraded to $\frac{1}{2}$ by appealing to Hanlon's work~\cite{hanlon}. Either way, from the enumeration
of unlabeled interval orders \cite[Theorem~1]{brightwell_keller} we deduce that $\overline{f_1}(n)=e^{n\log n - (1+\log(\pi^2/6))n +O(\log n)}$.

Since the number of labeled and the number of unlabeled $d$-representable complexes differ by a factor of at most $n!$,
and $f_d(n)=e^{\Theta(n^d \log n)}$, it follows that $\overline{f_d}(n)=e^{\Theta(n^d \log n)}$ for $d\geq 2$ as well.
Similarly, the number of unlabeled $d$-collapsible complexes is $e^{\Theta(n^{d+1})}$ for $d\geq 1$, as per \Cref{thm:d-collapsible}. 

\item It is natural to consider, instead of the entire nerve, a skeleton thereof. Let $f_{d,k}(n)$ be the number of $k$-dimensional simplicial
  complexes that are $k$-skeletons of some $d$-representable complex on the vertex set $[n]$. This paper treated the case $k=d$. The case $(d,k)=(2,1)$
  was treated in \cite{pry}, where it is shown that almost almost every intersection graph of convex sets in the plane can be partitioned into $4$ parts
  such that $3$ of them induce a clique and the $4$th one splits into two cliques with no edge running between them.
  Using this, a computer algebra calculation suggests that
  $
    f_{2,1}(n)=\frac{2^{(3n^2+11)/8}}{3(n\ln 2)^{3/2}}\bigl(1+o(1)\bigr).
  $
  The case $d\geq 2k+1$ is trivial
  because every $k$-complex is a $k$-skeleton of some $d$-representable complex \cite{wegner_thesis,perelman} (see \cite{tancer_survey} for a proof in English).
  Hence, $f_{d,k}(n)=2^{\binom{n}{k+1}}$ for $d\geq 2k+1$.
  We do not know how $f_{d,k}$ behaves for any other pair $(d,k)$.

\item The constructions of large families of $d$-representable complexes (\Cref{prop:split-construction,prop:beta2}) suggest the following problem.
Say that a pair of convex sets $C$ and $C'$ are \emph{tangent to each other} if there is a point $p\in \partial C\cap \partial C'$
such that both $C$ and $C'$ are smooth at $p$, have the same tangent hyperplane at~$p$, but lie on the opposite sides of the hyperplane.
Both \Cref{prop:split-construction,prop:beta2} implicitly construct families of (not necessarily distinct) convex sets
with many tangencies. For example, \Cref{prop:beta2} constructs a family of $n$ convex sets in the plane whose tangency graph
is the complete $4$-partite graph with $\approx\tfrac{3}{4}\binom{n}{2}$ edges. We do not know if the maximum
number of tangencies among $n$ convex sets in~$\R^d$ is $\frac{d+1}{d+2}\binom{n}{2}+o(n^2)$.

\end{itemize}

\bibliographystyle{plain}
\bibliography{enumrepresentable.bib}
\end{document}